\theoremstyle{plain}
\newtheorem{theorem}{Theorem}
\newtheorem{prop}[theorem]{Proposition}
\newtheorem{lemma}[theorem]{Lemma}
\newtheorem{fact}[theorem]{Fact}
\theoremstyle{definition}
\newtheorem{definition}{Definition}
\newtheorem{remark}{Remark}
\newcommand{\Z}{{\mathbb Z}}
\newcommand{\Q}{{\mathbb Q}}
\newcommand{\R}{{\mathbb R}}
\newcommand{\N}{{\mathbb N}}
\newcommand{\C}{{\mathbb C}}
\newcommand{\A}{{\mathcal A}}
\newcommand{\la}{{\mathcal L}}
\newcommand{\im}{{\mathrm{i}}}
\newcommand{\e}{\operatorname{e}}
\newcommand{\F}{\mathcal F}
\newcommand{\oplam}{\mbox{\Large $\curlywedge$}}
\begin{document}

\title{Diffraction intensities of a class of binary\\[2mm]
  Pisot substitutions via exponential sums}

\author{Timo Spindeler}
\address{Fakult\"at f\"ur Mathematik, Universit\"at Bielefeld, \newline
\hspace*{\parindent}Postfach 100131, 33501 Bielefeld, Germany}
\email{tspindel@math.uni-bielefeld.de}

\begin{abstract}  
This paper is concerned with the study of diffraction intensities of a 
relevant class of binary Pisot substitutions via exponential sums. 
Arithmetic properties of algebraic integers are used to give a new 
and constructive proof of the fact that there are no diffraction 
intensities outside the Fourier module of the underlying cut and 
project schemes. The results are then applied in the context of random
substitutions.
\end{abstract}

\keywords{exponential sums, diffraction intensities, Pisot substitutions, qusaicrystals}

\subjclass[2010]{11L07, 52C23, 37B10}

\maketitle

\section{Introduction}

The aim of mathematical diffraction theory is to describe the 
structure of point configurations in space (which model crystals 
and quasicrystals) through the associated autocorrelation and 
diffraction measures. Bombieri and Taylor were among the first
to raise the question which distributions of matter diffract, 
i.e. show sharp spots (or Bragg peaks) in their diffraction patterns; 
see \cite{bt}. There are two successful approaches to generate 
such structures. The first is by creating certain tilings by the 
method of inflation followed by decomposition using a finite set 
of proto-tiles. The second is by creating point sets through the 
method of cut and project sets, which are also called model sets. 
Either way, it is desirable to obtain explicit formulas for the 
associated diffraction measure; see \cite[Thm.~9.4]{bg} and 
\cite[Prop.~9.9]{bg}. However, some parts of the proof of 
\cite[Prop.~9.9]{bg} are not constructive and require knowledge 
of abstract harmonic analysis, which is due to \cite[Prop.~4.5.1]{mey}.

The objective of this paper is to give a constructive and elementary 
proof of pure point diffraction for a certain class of binary Pisot 
substitutions via exponential sums. This will be done in Section 3, 
Theorem~$\ref{thm:outside}$. The key ingredients are arithmetic
properties of powers of algebraic integers $\alpha$, i.e. expressions
of the form $\{\xi\alpha^n\}$, $\xi\in\R$; compare 
Lemmas~$\ref{lem:help-1}$ and ~$\ref{lem:help-2}$. 
Here, $\{x\}$ denotes the fractional part of $x$, $[x]$ denotes the integer
part of $x$ and 
\[
\|x\|:= \min \{ |x-m| \, |\, m\in \Z\}.
\]
These were investigated by Dubickas in \cite{du2,du1}. In Section 4, 
the main result will be extended to random inflation tilings, i.e. the
result states that there are no pure point diffraction intensities outside 
the Fourier module for a special class of stochastic substitutions. These
one-dimensional tilings, which belong to a class of tilings first considered 
by Godr\`eche and Luck in \cite{gl1}, extend the study of conventional 
substitutions and introduce the notion of local mixtures of substitution 
rules on the basis of a fixed probability vector; see \cite{mo,mo2} for 
further details.

\section{Preliminaries} 

The purpose of this section is to summarise results from mathmatical 
diffraction theory; see \cite[Chs. 8 and 9]{bg} for general background. 
Let $P$ be an infinite uniformly discrete point set in $\R$. Define the 
attributed \emph{Dirac comb} by
\[
   \delta_P \, := \sum_{x\in P} \delta_x 
    \quad \text{ together with } \quad 
   \widetilde{\delta_P} \, := \sum_{y\in P} \delta_{-y}
\]
and study the properties of the family of measures 
$\{\gamma_P^{(n)}\, |\, n>0\}$ with
\[
   \gamma_P^{(n)}\, := \, \gamma_{\delta_P}^{(n)} \, := \, 
    \frac{\delta_{P_n}* \widetilde{\delta_{P_n}}}
     {\operatorname{vol}(B_n)} 
\]
and $P_n:=P\cap B_n(0)$. 

It is not clear that the sequence $(\gamma_P^{(n)})_{n\in\N}$
converges. Each $\gamma_P^{(n)}$ is well-defined (since $\delta_{P_n}$
is a finite measure with compact support) and positive definite by
construction. Every accumulation point of $\{\gamma_P^{(n)}\ |\ n>0\}$
in the vague topology is called an \emph{autocorrelation measure}
of $\delta_P$, and as such it is a positive definite measure by
construction. If only one accumulation point exists, the
autocorrelation measure
\[
     \gamma_P:=\lim_{n\to\infty} \gamma_P^{(n)}
\]
is well-defined and Fourier transformable. Its Fourier transform 
$\widehat{\gamma_P}$ is called \emph{diffraction measure}.
In the case of a cut and project scheme (CPS), there are explicit 
formulas for the diffraction measure;
for a detailed introduction of cut and project schemes,
we refer the reader to \cite[Ch. 7]{bg}, \cite{moo} as well as
\cite{mey2}.

\begin{theorem}\cite[Thm.~9.4]{bg}\label{thm:diffract-1}
  Let $\Lambda=\oplam(W)$ be a regular model set for the CPS
  $(\R,H,\la)$ with compact window $W=\overline{W^{\circ}}$ and
  autocorrelation $\gamma_{\Lambda}$. The diffraction measure
  $\widehat{\gamma_{\Lambda}}$ is a positive and positive definite,
  translation bounded measure. It is explicitly given by
\[
     \widehat{\gamma_{\Lambda}} \, =
   \sum_{k\in L^{\circledast}}I(k)\delta_k \, ,   
\]
where the diffraction intensities are $I(k)=|A(k)|^2$ with
the amplitudes
\[
    A(k) \, = \, 
   \frac{\operatorname{dens}(\Lambda)}
    {\mu_H(W)}\widehat{1_W}(-k^{\star})
\] 
and supporting set $L^{\circledast} = \pi_1(\la^*)$, and $\la^*$
is the dual lattice of $\la$.  \qed
\end{theorem}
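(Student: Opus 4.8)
The plan is to obtain the diffraction measure in two stages: first compute the autocorrelation $\gamma_\Lambda$ in closed form, then Fourier transform it by lifting everything to the lattice $\la\subseteq\R\times H$ of the scheme, where Poisson's summation formula is available. Throughout I write $(\cdot)^{\star}$ for the star map, $\pi_1,\pi_2$ for the two projections of $\R\times H$ onto its factors, $\operatorname{covol}(\la)$ for the covolume of $\la$, and I use the density formula $\operatorname{dens}(\Lambda)=\mu_H(W)/\operatorname{covol}(\la)$ for regular model sets.

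First I would show that the vague limit $\gamma_\Lambda=\lim_n\gamma_\Lambda^{(n)}$ exists and equals $\sum_{z\in\Delta}\eta(z)\,\delta_z$, where $\Delta:=\Lambda-\Lambda$ and $\eta(z)=\operatorname{dens}(\Lambda\cap(\Lambda-z))$. The key point is that $\Lambda\cap(\Lambda-z)=\oplam(W\cap(W-z^{\star}))$, a set whose window has Haar-null boundary (contained in $\partial W\cup(\partial W-z^{\star})$), so its density is $\mu_H(W\cap(W-z^{\star}))/\operatorname{covol}(\la)$. Writing $h:=1_W*\widetilde{1_W}$ — continuous with compact support in $H$, with $h(z^{\star})=\mu_H(W\cap(W-z^{\star}))$ — and inserting the density formula for $\Lambda$, this gives
\[
  \gamma_\Lambda\;=\;\frac{\operatorname{dens}(\Lambda)}{\mu_H(W)}\sum_{z\in\Delta}h(z^{\star})\,\delta_z\;=\;\frac{\operatorname{dens}(\Lambda)}{\mu_H(W)}\sum_{\ell\in\la}h(\pi_2(\ell))\,\delta_{\pi_1(\ell)},
\]
the last sum being effectively finite in every bounded region since $h$ has compact support; as $\Delta$ is uniformly discrete, $\gamma_\Lambda$ is in particular translation bounded and pure point supported.

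Next I would Fourier transform $\gamma_\Lambda$ by testing it against Schwartz functions $\varphi$ on $\R$ and invoking Poisson's summation formula for $\la$, in the form $\widehat{\delta_\la}=\operatorname{covol}(\la)^{-1}\delta_{\la^{*}}$. From $\widehat{\gamma_\Lambda}(\varphi)=\gamma_\Lambda(\widehat\varphi)=\frac{\operatorname{dens}(\Lambda)}{\mu_H(W)}\sum_{\ell\in\la}(\widehat\varphi\otimes h)(\ell)$, together with $\widehat{\widehat\varphi\otimes h}=\varphi(-\,\cdot\,)\otimes\widehat h$ and $\widehat h=\widehat{1_W}\,\overline{\widehat{1_W}}=|\widehat{1_W}|^{2}$, the sum becomes $\frac{\operatorname{dens}(\Lambda)}{\mu_H(W)\operatorname{covol}(\la)}\sum_{\ell^{*}\in\la^{*}}\varphi(-\pi_1(\ell^{*}))\,|\widehat{1_W}(\pi_2(\ell^{*}))|^{2}$. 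Since $\pi_1$ is injective on $\la^{*}$ — which is exactly what makes $k^{\star}:=\pi_2(\ell^{*})$ well defined for $k=\pi_1(\ell^{*})\in\pi_1(\la^{*})=L^{\circledast}$ — reading off coefficients (and using $|\widehat{1_W}(\chi)|=|\widehat{1_W}(-\chi)|$) yields
\[
  \widehat{\gamma_\Lambda}\;=\;\frac{\operatorname{dens}(\Lambda)}{\mu_H(W)\operatorname{covol}(\la)}\sum_{k\in L^{\circledast}}\bigl|\widehat{1_W}(-k^{\star})\bigr|^{2}\,\delta_k\,.
\]
Because $\operatorname{covol}(\la)^{-1}=\operatorname{dens}(\Lambda)/\mu_H(W)$, the coefficient of $\delta_k$ is precisely $|A(k)|^{2}$ with $A(k)=\frac{\operatorname{dens}(\Lambda)}{\mu_H(W)}\widehat{1_W}(-k^{\star})$, which is the asserted formula. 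Positivity of $\widehat{\gamma_\Lambda}$ is then manifest; it is positive definite because each $\gamma_\Lambda^{(n)}$ is positive definite by construction, so $\gamma_\Lambda$ is, and its Fourier transform is a positive measure by the Bochner--Schwartz theorem; translation boundedness follows since the Fourier transform of a translation bounded positive definite measure is again translation bounded.

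The step I expect to be the main obstacle is the abstract harmonic analysis hidden in both stages. In the first, the existence of the limit $\gamma_\Lambda$ and the value of $\eta(z)$ rest on the uniform distribution of $\{x^{\star}\,:\,x\in\Lambda\cap B_n\}$ in $W$ with respect to normalised Haar measure, hence on the density of $\la$ in $\R\times H$ and on the regularity hypothesis $\mu_H(\partial W)=0$. In the second, one needs Poisson's summation formula on $\R\times H$, and for a window $W$ that is merely regular one must moreover verify that its covariogram $h$ lies in a function class for which it is valid. Carrying all of this out for a general internal group $H$ is precisely the non-constructive, harmonic-analytic input that one would like to avoid — and replacing it by elementary arithmetic arguments in the substitution setting is the purpose of the remainder of the paper.
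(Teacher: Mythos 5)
The paper does not prove this statement at all: it is imported verbatim from \cite[Thm.~9.4]{bg} and stated with a \textit{qed}, so there is no in-paper argument to compare against. Your sketch is a correct outline of the standard proof given in that reference (and in Hof and Baake--Moody): compute $\gamma_\Lambda=\sum_{z}\eta(z)\delta_z$ with $\eta(z)=\mu_H(W\cap(W-z^{\star}))/\operatorname{covol}(\la)$ via uniform distribution in the window, lift to the lattice $\la$, and apply Poisson summation to $\widehat\varphi\otimes(1_W*\widetilde{1_W})$; the coefficient bookkeeping with $\operatorname{dens}(\Lambda)=\mu_H(W)/\operatorname{covol}(\la)$ is right. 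You also correctly identify, without resolving, the two genuinely hard analytic points --- existence of the autocorrelation via Weyl-type equidistribution for regular windows, and the validity of Poisson summation for the covariogram of a window that is merely regular (in \cite{bg} this is handled by approximation from the continuous, compactly supported case) --- which is exactly the non-elementary input that the rest of the present paper seeks to bypass for the substitution examples.
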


Furthermore, there is an alternative approach via exponential sums 
which is justified by  \cite[Thm.~3.4]{hof}, because we deal with
pure point measures.

\begin{prop}\cite[Prop.~9.9]{bg}\label{prop:diffract-1}  
  Consider a regular model set $\Lambda=\oplam(W)$ for the 
  CPS $(\R,\R,\la)$, with compact window
  $W=\overline{W^{\circ}}$ and Fourier module $
  L^{\circledast}=\pi_1(\la^*)\subseteq \R$. Then, one has
\[
    \frac{1}{|B_N|} \sum_{x\in\Lambda_N} e^{-2\pi\im kx} \
    \xrightarrow{N\to\infty}\ \begin{cases}
    A(k), & k\in  L^{\circledast}, \\
     0, & \text{otherwise}, \end{cases}
\]
where $A(k)$ is the amplitude of Theorem~$\ref{thm:diffract-1}$ for
the internal space $H=\R$.   \qed 
\end{prop}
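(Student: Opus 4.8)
The plan is to handle the two cases $k\in L^{\circledast}$ and $k\notin L^{\circledast}$ separately: the first by a direct equidistribution computation, the second by combining Theorem~\ref{thm:diffract-1} with Hof's identity for the point part of the diffraction. Write $L=\pi_1(\la)$ and, for $x\in L$, put $x^{\star}=\pi_2(\pi_1^{-1}(x))\in\R$, so that $\Lambda=\oplam(W)=\{x\in L:x^{\star}\in W\}$ and
\[
  \sum_{x\in\Lambda_N}e^{-2\pi\im kx}\;=\;\sum_{x\in L\cap B_N}1_W(x^{\star})\,e^{-2\pi\im kx}\,.
\]
The basic analytic input is the uniform distribution of the $\star$-images: for every $g\in C_c(\R)$,
\[
  \frac{1}{\operatorname{vol}(B_N)}\sum_{x\in L\cap B_N}g(x^{\star})\;\xrightarrow{\;N\to\infty\;}\;\operatorname{dens}(\la)\int_{\R}g\,d\mu_H\,,
\]
where $\operatorname{dens}(\la)=1/\operatorname{covol}(\la)$; see \cite[Ch.~7]{bg}. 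One may also obtain this by hand via Poisson summation on $\la$ applied to test functions of product form $\psi_N\otimes g$, with $\psi_N$ a smooth approximation of $1_{B_N}$: as $N\to\infty$ only the lattice point $(0,0)\in\la^{*}$ survives on the spectral side, precisely because $\pi_2(\la)$ is dense in $\R$, leaving $\operatorname{covol}(\la)^{-1}\widehat{g}(0)$. Taking $g=1_W$ yields in particular the density formula $\operatorname{dens}(\Lambda)=\operatorname{dens}(\la)\,\mu_H(W)$, which will be used at the end.

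Suppose first $k\in L^{\circledast}$. Since $\pi_1$ is injective on $\la^{*}$, there is a unique $k^{\star}\in\R$ with $(k,k^{\star})\in\la^{*}$; pairing this with $(x,x^{\star})\in\la$ gives $kx+k^{\star}x^{\star}\in\Z$, hence $e^{-2\pi\im kx}=e^{2\pi\im k^{\star}x^{\star}}$ for every $x\in L$. Thus the exponential sum equals $\sum_{x\in L\cap B_N}g(x^{\star})$ with $g=1_W\cdot e^{2\pi\im k^{\star}(\cdot)}$, which now depends on $x^{\star}$ alone. This $g$ is bounded, compactly supported, and Riemann integrable, because $W=\overline{W^{\circ}}$ is regular and hence $\mu_H(\partial W)=0$; squeezing $g$ between continuous compactly supported functions from below and above and applying the uniform-distribution limit to both gives
\[
  \frac{1}{\operatorname{vol}(B_N)}\sum_{x\in\Lambda_N}e^{-2\pi\im kx}\;\xrightarrow{\;N\to\infty\;}\;\operatorname{dens}(\la)\int_W e^{2\pi\im k^{\star}t}\,d\mu_H(t)\;=\;\operatorname{dens}(\la)\,\widehat{1_W}(-k^{\star})\,.
\]
By the density formula this equals $\tfrac{\operatorname{dens}(\Lambda)}{\mu_H(W)}\widehat{1_W}(-k^{\star})=A(k)$, as required.

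Now suppose $k\notin L^{\circledast}$; here I argue indirectly. By Theorem~\ref{thm:diffract-1}, $\Lambda$ has autocorrelation $\gamma_{\Lambda}$ and its diffraction $\widehat{\gamma_{\Lambda}}=\sum_{k'\in L^{\circledast}}I(k')\delta_{k'}$ is a pure point measure supported on $L^{\circledast}$, so $\widehat{\gamma_{\Lambda}}(\{k\})=0$. On the other hand, since $\gamma_{\Lambda}$ exists and is pure point and $(B_N)_{N>0}$ is a van Hove sequence, \cite[Thm.~3.4]{hof} identifies this point mass with the corresponding exponential-sum intensity,
\[
  \widehat{\gamma_{\Lambda}}(\{k\})\;=\;\lim_{N\to\infty}\frac{1}{\operatorname{vol}(B_N)}\Bigl|\sum_{x\in\Lambda_N}e^{-2\pi\im kx}\Bigr|^{2}\,,
\]
so this limit is $0$; writing $S_N$ for the exponential sum, $|S_N|/\operatorname{vol}(B_N)=\bigl(|S_N|/\operatorname{vol}(B_N)^{1/2}\bigr)\cdot\operatorname{vol}(B_N)^{-1/2}\to0$ since $\operatorname{vol}(B_N)\to\infty$. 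This completes the proof. I expect this last case to be the main obstacle to a fully self-contained argument: because $L^{\circledast}=\pi_1(\la^{*})$ is dense in $\R$, one cannot conclude that the exponential sum is negligible from the absence of nearby Bragg positions, and a direct Poisson-summation estimate would require controlling a sum over $\la^{*}$ whose individual terms do not decay — routing the case through the pure-pointedness of $\widehat{\gamma_{\Lambda}}$ sidesteps this. In the case $k\in L^{\circledast}$, the only delicate point is the passage from continuous test functions to the indicator $1_W$, which is legitimate precisely because the window is regular.
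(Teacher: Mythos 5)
Your argument is correct in substance, but only its first half coincides with the proof the paper is citing. For $k\in L^{\circledast}$ you reproduce the standard constructive argument behind \cite[Prop.~9.9]{bg}: the dual-lattice pairing turns the exponential sum into a Weyl sum of the Riemann-integrable function $1^{}_W\,e^{2\pi\im k^{\star}(\cdot)}$ over the $\star$-images, and equidistribution in internal space together with the density formula $\operatorname{dens}(\Lambda)=\operatorname{dens}(\la)\,\mu_H(W)$ yields $A(k)$; the passage from $C_c$ to $1^{}_W$ is exactly where regularity of the window ($\mu_H(\partial W)=0$) enters, as you note. For the complement, however, the cited proof splits into $k\in\Q(L^{\circledast})\setminus L^{\circledast}$ (still handled by explicit exponential-sum estimates) and $k\notin\Q(L^{\circledast})$, where it invokes Meyer's abstract result \cite[Prop.~4.5.1]{mey}; you instead route the entire case $k\notin L^{\circledast}$ through Theorem~\ref{thm:diffract-1} combined with the Bombieri--Taylor/Hof identity, concluding that the normalised amplitude must vanish because $\widehat{\gamma_{\Lambda}}$ is pure point and supported on $L^{\circledast}$. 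This is legitimate and is the same bridge the paper itself uses to pass to Eq.~(\ref{b}), but you should be clear about what it buys and what it costs: it is shorter and uniform in $k$, yet it presupposes the full diffraction formula of Theorem~\ref{thm:diffract-1} and the validity of Hof's identity along the chosen van Hove sequence, so it is every bit as ``soft'' as Meyer's argument and produces no explicit decay of the exponential sums --- which is precisely the gap that Theorem~\ref{thm:outside} is written to fill for the Pisot class. One correction: Hof's identity reads $\widehat{\gamma_{\Lambda}}(\{k\})=\lim_{N}\bigl|\operatorname{vol}(B_N)^{-1}\sum_{x\in\Lambda_N}e^{-2\pi\im kx}\bigr|^{2}$, with the \emph{square} of the volume in the denominator; your single power is inconsistent already at $k=0$, where the intensity is $\operatorname{dens}(\Lambda)^{2}$, and it is what forces your artificial rescaling by $\operatorname{vol}(B_N)^{-1/2}$ at the end. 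With the correct normalisation the conclusion $\operatorname{vol}(B_N)^{-1}\sum_{x\in\Lambda_N}e^{-2\pi\im kx}\to 0$ is immediate, so the slip is harmless but should be fixed.
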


The proof of this result is constructive when $k\in L^{\circledast}$
or $k\in \Q( L^{\circledast})$, in the sense that it uses an explicit
convergence argument via exponential sums. For $k\notin \Q(
L^{\circledast})$, however, it uses an abstract argument from 
\cite{mey}. In what follows, we demonstrate an alternative approach 
via exponential sums, for a relevant class of Pisot substitutions.

\section{Intensities of a certain class of Pisot substitutions}

Consider the binary alphabet $\A=\{a,b\}$ with the dictionary $\A_2^*$
 and the substitution 
\[
   \sigma:\A_2^* \to \A_2^*, \quad 
   \sigma: \begin{cases} 
     a\mapsto w(a,b) \\
     b\mapsto a\end{cases}, 
\]
where $w(a,b)$ is a word in $a$ and $b$, in which the letter $a$
occures $p$ times and the letter $b$ occures $q$ times with
$p,q\in\N$, $p\ge q$.  The eigenvalues of the substitution matrix
\[
   M_{\sigma}= \begin{pmatrix} 
     p & 1 \\ q & 0 \end{pmatrix}
\]
are
\[ 
   \vartheta\, := \, \frac{p+\sqrt{p^2+4q}}{2} 
    \quad \text{ und } \quad 
   \vartheta' \, := \, \frac{p-\sqrt{p^2+4q}}{2} .
\]
The assumption $q\leq p$ guaranties that $\sigma$ is a \emph{Pisot
  substitution}. A primitive substitution is called Pisot
substitution if the Perron--Frobenius eigenvalue
$\lambda_{\textrm{PF}}$ is a Pisot--Vijayaraghavan (PV) number, i.e.\
an algebraic integer strictly greater than $1$ whose conjugates lie
inside the open unit disc, $\{z\in\C\ |\ |z|<1\}$.

The left eigenvector $v=(\vartheta,1)$ gives rise to a
\emph{geometric} realisation of $\sigma$ as a tiling of the real
line by two types of intervals; see \cite[p.~74]{bg}.  To calculate the
diffraction intensities of these tilings, we need some preparation.

Define $w^{(0)}:=b$ and $w^{(n)}:=\sigma^n(b)$ for $n\in\N$. This
sequence of words $(w^{(n)})_{n\in\N}$ satisfies a \emph{concatenation
  rule}, i.e. if $w(a,b)=w_0\ldots w_{|w(a,b)|-1}$, we have
\[ 
w^{(n)}=w^{(j_0)}\cdots w^{(j_{|w(a,b)|-1})},
\]
where $j_i=n-1$ if $w_{i}=a$ and $j_i=n-2$ if $w_{i}=b$ 
(in the case $w(a,b)=ab$, the Fibonacci chain, this is
$w^{(n)}=w^{(n-1)}w^{(n-2)}$). Moreover, we consider a linear
recursion
\begin{equation} \label{a}
   \F_n \, = \, p\F_{n-1}+q\F_{n-2}, \quad \text{with} \quad
   \F_0=0 \text{ and } \F_1=1,  
\end{equation}
which has the unique solution
\[
    \F_n \, = \, \frac{1}{\sqrt{p^2+4q}}
   \left(\vartheta^n-\vartheta'^n\right).
\]
Via induction, we get

\begin{fact} \label{fact}
  For all $n\in\N_0$, one has the identity
 $\,\vartheta^{n+2} = \F^{}_{\! n+2}\vartheta+q\F^{}_{\! n+1} $. 
\end{fact}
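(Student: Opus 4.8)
The plan is to argue by induction on $n$, the only non-trivial input being the quadratic relation satisfied by $\vartheta$. Since $\vartheta$ is an eigenvalue of $M_\sigma$, it is a root of the characteristic polynomial $\lambda^2-p\lambda-q$, so that
\[
\vartheta^{2}=p\vartheta+q .
\]
This identity, together with the recursion \eqref{a} for the $\F_n$, is everything the argument needs.

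For the base case $n=0$ I would note that \eqref{a} gives $\F_2=p\F_1+q\F_0=p$ while $\F_1=1$, hence $\F_2\vartheta+q\F_1=p\vartheta+q$, which equals $\vartheta^{2}$ by the displayed relation. For the inductive step, assume $\vartheta^{n+2}=\F_{n+2}\vartheta+q\F_{n+1}$ for some $n\in\N_0$. Multiplying by $\vartheta$ and then replacing $\vartheta^{2}$ by $p\vartheta+q$ yields
\[
\vartheta^{n+3}=\F_{n+2}\vartheta^{2}+q\F_{n+1}\vartheta=(p\F_{n+2}+q\F_{n+1})\vartheta+q\F_{n+2},
\]
and $p\F_{n+2}+q\F_{n+1}=\F_{n+3}$ by \eqref{a}, so the identity holds with $n$ replaced by $n+1$. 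This closes the induction.

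I do not expect a genuine obstacle here: the computation is a two-line manipulation, and the only point requiring attention is the index bookkeeping in the recursion — ordinary (not strong) induction suffices, and no separate base case at $n=1$ is needed. As a cross-check one could instead substitute the closed form $\F_n=(\vartheta^n-\vartheta'^n)/\sqrt{p^2+4q}$ directly and simplify using $\vartheta+\vartheta'=p$, $\vartheta\vartheta'=-q$ and $\vartheta-\vartheta'=\sqrt{p^2+4q}$; this gives a one-step verification, but is less transparent than the induction that the wording of the statement already suggests.
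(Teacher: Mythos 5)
Your induction is correct and is exactly what the paper intends, since it introduces the identity with the words ``Via induction, we get'': the base case $\vartheta^{2}=p\vartheta+q=\F_2\vartheta+q\F_1$ comes from the characteristic polynomial of $M_\sigma$, and the step multiplies by $\vartheta$ and applies the recursion~(\ref{a}). Nothing is missing.
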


\begin{remark}
  In the case $p=q=1$, these are the well-known relations between the
  Fibonacci numbers $F_n$ and the golden ratio $\tau$, since
\[
    F_n \, = \, \frac{1}{\sqrt{5}}\big(\tau^n-\tau'^n\big) 
    \quad \text{ and }\quad 
     \tau^{n+2}=F_{n+2}\tau+ F_{n+1}.
\]
On the other hand, the case $n=0$ gives us $\vartheta^2=p\vartheta+q$.
Note that we also have $\ell(w^{(n)})=\vartheta^n$.
\end{remark}

In what follows, the Fourier amplitude for the geometric patch defined by
$w^{(n)}$ is denoted by $A_n(k)$, i.e.
\[
    A_n(k)=\sum_{j=1}^{|w^{(n)}|} \e^{-2\pi \text{i}kx_j}
\]
(note that this quantity is not normalised per point), where $|w^{(n)}|$
is the number of letters of $w^{(n)}$. Following \cite[Sec.~2]{gl1} and 
using the concatenation rule, one can deduce
\[
    A_n(k) \, = \, 
    f_{n-1}(k)A_{n-1}(k)+g_{n-2}(k)A_{n-2}(k).
\]
Here, $g_n$ is a sum of $q$ exponential functions of the form
$\e^{-2\pi \im k \phi_0}$  and $f_n$ is of the form $1+
\e^{-2\pi \im k\vartheta^{n_0}}+\sum_{j=1}^{p-2}\e^{-2\pi\im k\phi_j}$, 
where $n_0\in\{n-1,n-2\}$ and $\phi_j\in\R$ for all $j\in\{0,\ldots,p-2\}$. 
 By \cite[Thm.~3.2]{hof}, the intensities can be calculated as
\begin{equation}\label{b}
   I(k) \, = \lim_{n\to\infty}
  \frac{|A_n(k)|^2}{\vartheta^{2n}}. 
\end{equation} 
Any of the Pisot substitutions under investigation here, due to
Hollander and Solomyak \cite{hs} and Sing \cite{sin}, possesses a
description as a model set, so that the constructive part of
Proposition~\ref{prop:diffract-1} applies to all $k\in \Q(\vartheta)$. In 
particular, one then has
\begin{equation} \label{1}
    \lim_{n\to\infty} \frac{A_n(k)}{\vartheta^n}=A(k)
    \quad \text{ and } \quad I(k) \, = \, |A(k)|^2
\end{equation}
with $A(k)$ according to Theorem~\ref{thm:diffract-1}, where the
construction of the underlying CPS follows from \cite{bm,sin}. 

To determine the limit in Eq. ~(\ref{b}) for all remaining $k\in \R$, we need the
following three Lemmata.

\begin{lemma}\cite[Cor.~2]{du1}\label{lem:help-1} 
  If\/ $\alpha$ is a PV number and\/ $\xi\notin\Q(\alpha)$, one has
\[
    \limsup_{n\to\infty} \{\xi\alpha^n\} - 
   \liminf_{n\to\infty}\{\xi\alpha^n\}
   \, \ge \, \frac{1}{1+\alpha}.  
   \hfill  
\]  \qed 
\end{lemma}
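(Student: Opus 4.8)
The plan is to prove the contrapositive. Assume $\limsup_{n\to\infty}\{\xi\alpha^n\}-\liminf_{n\to\infty}\{\xi\alpha^n\}<\tfrac{1}{1+\alpha}$; we show $\xi\in\Q(\alpha)$. The case $\xi=0$ is trivial, so assume $\xi\neq0$, fix $D$ with $\limsup-\liminf<D<\tfrac{1}{1+\alpha}$, and note that after discarding finitely many terms (which affects neither the hypothesis nor the conclusion) the fractional parts $\beta_n:=\{\xi\alpha^n\}$ all lie in one interval $[c,c+D]$.

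Write $m_n:=[\xi\alpha^n]\in\Z$, so $\xi\alpha^n=m_n+\beta_n$. From $\xi\alpha^{n+1}=\alpha\cdot\xi\alpha^n$ one obtains $m_{n+1}-\alpha m_n=\alpha\beta_n-\beta_{n+1}$, and since $\beta_n,\beta_{n+1}\in[c,c+D]$ the right-hand side lies in a fixed interval of length $(1+\alpha)D$; the hypothesis $D<\tfrac{1}{1+\alpha}$ says \emph{exactly} that this length is $<1$. Let $P(x)=x^{d}-a_{d-1}x^{d-1}-\cdots-a_0\in\Z[x]$ be the minimal polynomial of the algebraic integer $\alpha$, and set
\[
 t_n:=m_{n+d}-a_{d-1}m_{n+d-1}-\cdots-a_0m_n=-\bigl(\beta_{n+d}-a_{d-1}\beta_{n+d-1}-\cdots-a_0\beta_n\bigr)\in\Z,
\]
a bounded integer sequence.

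The reduction is: it suffices to prove that $(t_n)$ is eventually periodic. Indeed, in that case $(m_n)$ eventually satisfies a genuine linear recursion over $\Q$, so its generating function $\sum_n m_nx^{n}$ is a rational function whose pole of least modulus is the zero $x=1/\alpha$ of $1-a_{d-1}x-\cdots-a_0x^{d}=x^{d}P(1/x)$ — this is a genuine pole because the conjugates $\alpha_2,\dots,\alpha_d$ lie in the open unit disc, $\alpha$ is no root of unity, and $\xi\neq0$; it is simple because $m_n/\alpha^{n}$ has the finite limit $\xi$. The associated partial-fraction coefficient is then a ratio of values at $1/\alpha$ of polynomials over $\Q$, hence lies in $\Q(1/\alpha)=\Q(\alpha)$, and since that coefficient also equals $\lim_n m_n/\alpha^{n}=\xi$ we conclude $\xi\in\Q(\alpha)$.

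The core of the proof — and the only place where the sharp constant $\tfrac{1}{1+\alpha}$ is spent — is therefore the eventual periodicity of $(t_n)$. Because $D<1$, for each large $n$ there is exactly one integer $t$ for which $a_{d-1}\beta_{n+d-1}+\cdots+a_0\beta_n-t$ can again lie in $[c,c+D]$; hence $t_n$, and then $\beta_{n+d}$, is a piecewise-affine function of $(\beta_n,\dots,\beta_{n+d-1})$, so the orbit of $d$-tuples follows a well-defined map $F$ of the cube $[c,c+D]^{d}$ into itself whose linearisation is the companion map of $\alpha$, with one expanding eigenvalue $\alpha>1$ and $d-1$ contracting ones. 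A forward orbit that stays confined while $F$ expands in that direction can only do so if the sequence of affine branches it follows, i.e. the sequence $(t_n)$, is eventually periodic; making this quantitative is exactly where one needs the confining cube to be so small that $(1+\alpha)D<1$, equivalently that the perturbation $m_{n+1}-\alpha m_n$ of the underlying $\alpha$-multiplication occupies an interval shorter than a unit. This rigidity is the delicate step, and in practice it is a finite case analysis on the admissible transitions of $F$ — the kind that, already for $\alpha=\tau$ the golden ratio, forbids a "small" fractional part from being followed by a "large" one and thereby collapses every admissible itinerary to a single cycle. Granting it, the reduction above finishes the proof; as a consistency check, when $\xi\in\Q(\alpha)$ the sequence $(t_n)$ is indeed eventually periodic, since there $\beta_n$ equals an eventually periodic, $\tfrac1N\Z$-valued sequence up to a geometrically decaying error.
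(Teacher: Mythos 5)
First, a point of comparison: the paper offers no proof of this lemma at all --- it is imported verbatim from Dubickas \cite[Cor.~2]{du1}, whose argument runs through the \emph{reduced length} $\ell(\alpha)=\inf_G L(PG)$ of the minimal polynomial (one proves $\limsup-\liminf\ge 1/\ell(\alpha)$ and then $\ell(\alpha)\le 1+\alpha$ for a PV number by letting $PG$ approximate $(x-\alpha)x^m$). Your route is genuinely different and dynamical, so it has to stand on its own.

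Your reduction is sound: if the integer sequence $t_n=m_{n+d}-\sum_i a_i m_{n+i}$ is eventually periodic, then $(m_n)$ eventually satisfies an integer linear recursion, its generating function is rational with at most a simple pole at $1/\alpha$, and $\xi=\lim_n m_n/\alpha^n$ is the corresponding partial-fraction coefficient, hence lies in $\Q(\alpha)$. The genuine gap is exactly the step you flag as ``the core'': the claim that confinement of $(\beta_n,\dots,\beta_{n+d-1})$ to the cube $[c,c+D]^d$ forces $(t_n)$ to be eventually periodic is asserted, not proved, and via your own reduction it is equivalent to the lemma itself. Hyperbolicity plus confinement does not yield periodic itineraries --- for a Pisot $\beta$-transformation every orbit is confined to $[0,1)$, yet only the countably many points of $\Q(\beta)$ have eventually periodic itineraries --- so everything must come from the smallness of $D$. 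But the threshold $1/(1+\alpha)$ never actually enters your written argument: the only quantitative inputs you use are $D<1$ (to make the branch $t_n$ well defined) and boundedness of $t_n$, while the observation that $m_{n+1}-\alpha m_n$ lies in an interval of length $(1+\alpha)D<1$ is never exploited, because that quantity is not an integer. What a pigeonhole on the integer $t_n$ does give is this: $t_n$ ranges over an interval of length $L(P)\,D$, where $L(P)=1+\sum_i\lvert a_i\rvert\ge 1+\alpha$, so for $D<1/L(P)$ the sequence $(t_n)$ is eventually \emph{constant} and your reduction finishes the proof --- but only with the weaker constant $1/L(P)$ (e.g.\ $1/3$ instead of $1/(1+\tau)\approx 0.382$ in the Fibonacci case). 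Bridging the gap from $1/L(P)$ to $1/(1+\alpha)$ is precisely where Dubickas needs real-coefficient multiples $PG$ of length approaching $1+\alpha$, an ingredient absent from your proposal; the promised ``finite case analysis on the admissible transitions'' is neither carried out nor shown to be finite, so the proof is incomplete at its decisive point.
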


\begin{lemma}\cite[Thm.~1]{du2}\label{lem:help-2}
  Let\/ $\alpha>1$ be an algebraic integer and let\/ $\xi>0$ be a real
  number. Then, the set\/ $\big\{\{\xi\alpha^n\}\ |\ n\in\N \big\}$
  has only finitely many limit points if and only if\/ $\alpha$ is a
  PV number and\/ $\xi\in\Q(\alpha)$.  \qed
\end{lemma}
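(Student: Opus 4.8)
The plan is to treat the two implications separately: the ``if'' part is a direct computation with algebraic conjugates, while the ``only if'' part is reduced, via a torus construction, to the classical Pisot--Vijayaraghavan circle of ideas. For ``if'', assume $\alpha$ is a PV number with conjugates $\alpha=\alpha^{(1)},\dots,\alpha^{(d)}$ (so $|\alpha^{(j)}|<1$ for $j\ge2$) and $\xi\in\Q(\alpha)$. First I would clear denominators, writing $\xi=\beta/m$ with $\beta$ an algebraic integer of $\Q(\alpha)$ and $m\in\N$; then the rational trace $T_n:=\operatorname{Tr}_{\Q(\alpha)/\Q}(\beta\alpha^n)=\sum_{j=1}^{d}\beta^{(j)}(\alpha^{(j)})^n$ is a rational integer and $\beta\alpha^n=T_n-\varepsilon_n$ with $\varepsilon_n:=\sum_{j\ge2}\beta^{(j)}(\alpha^{(j)})^n\to0$ exponentially fast. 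Hence $\{\xi\alpha^n\}=\{(T_n-\varepsilon_n)/m\}$, and since $T_n$ runs through at most $m$ residues modulo $m$ while $\varepsilon_n/m\to0$, every limit point lies in the finite set $\{0,\tfrac1m,\dots,\tfrac{m-1}{m},1\}$.

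For ``only if'', assume the set $L$ of limit points of $(\{\xi\alpha^n\})_n$ is finite. The crucial step is to lift the recurrence for $\xi\alpha^n$ onto a torus. Let $m_\alpha(x)=x^d-c_{d-1}x^{d-1}-\dots-c_0\in\Z[x]$ be the minimal polynomial of $\alpha$, let $C$ be its integral companion matrix, and put $V_n:=(\xi\alpha^n,\xi\alpha^{n+1},\dots,\xi\alpha^{n+d-1})^{\top}$, so that $V_{n+1}=CV_n$. As $C$ has integer entries, this descends to $\R^d/\Z^d$: writing $\bar V_n$ for the class of $V_n$, one has $\bar V_{n+1}=C\bar V_n$, and each coordinate of $\bar V_n$ is some $\{\xi\alpha^{n+i}\}$, whose limit set is again $L$. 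Therefore the $\omega$-limit set $\Omega$ of the orbit $(\bar V_n)_n$ under $C$ is a finite subset of $L^d$; it is forward invariant, and by compactness each of its points has a $C$-preimage inside it, so $C|_\Omega$ is a permutation and $\Omega$ is a finite union of $C$-periodic orbits. Since $m_\alpha$ is irreducible and $\alpha>1$, no conjugate of $\alpha$ is a root of unity, so $C^p-I$ is invertible for every $p\ge1$; hence every $C$-periodic point of $\R^d/\Z^d$, and so every point of $\Omega$, is a rational point of the torus, and projecting onto the first coordinate gives $L\subseteq\Q$.

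A shadowing argument then transfers this back. Since $\bar V_n$ converges to the $\omega$-limit set $\Omega$, for large $n$ the unique nearest point $\phi(n)\in\Omega$ to $\bar V_n$ satisfies $\phi(n+1)=C\phi(n)$, so $\phi$ cycles, with some period $p$, through one $C$-orbit whose first coordinates are rationals $r_0,\dots,r_{p-1}$; thus $\|\xi\alpha^n-r_{n\bmod p}\|\to0$, and clearing the common denominator $Q$ of the $r_j$ yields $\|Q\xi\,\alpha^n\|\to0$ with $Q\xi\ne0$. From here I would run the classical Pisot--Vijayaraghavan argument. Writing $I_n$ for the nearest integer to $Q\xi\alpha^n$, the integer $I_{n+d}-c_{d-1}I_{n+d-1}-\dots-c_0I_n$ tends to $0$, hence vanishes for large $n$; so $(I_n)$ is eventually a linear recurrence sequence whose minimal recurrence polynomial divides $m_\alpha$, and, since $I_n\to\infty$ excludes $I_n\equiv0$ and $m_\alpha$ is irreducible, it equals $m_\alpha$ itself, giving $I_n=\sum_{j=1}^d\lambda_j(\alpha^{(j)})^n$ with \emph{all} $\lambda_j\ne0$ and the $\lambda_j$ in the splitting field. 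Setting $\rho:=\max_{j\ge2}|\alpha^{(j)}|$ and comparing $Q\xi\alpha^n=I_n+o(1)$ with this closed form — dividing through by $\alpha^n$ if $\rho\le\alpha$ and by $\rho^n$ otherwise, then using a Cesàro/character-orthogonality argument — one sees that $\rho\ge1$ is impossible, since it would force the coefficients $\lambda_j$ of the conjugates of maximal modulus to vanish; hence all $\alpha^{(j)}$ with $j\ge2$ have modulus $<1$, i.e. $\alpha$ is a PV number. Then $\lambda_1=Q\xi$, and Galois invariance of the Vandermonde system relating the $\lambda_j$ to the integers $I_{N_0},\dots,I_{N_0+d-1}$ forces $\lambda_1\in\Q(\alpha)$, whence $\xi\in\Q(\alpha)$.

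The step I expect to be the main obstacle is the reduction of the second and third paragraphs: getting from ``$L$ finite'' all the way down to ``$\|Q\xi\alpha^n\|\to0$''. One must set up the torus correctly, check that the orbit genuinely converges to its $\omega$-limit set, verify that this $\omega$-limit set is a union of \emph{rational} periodic orbits (this is where irreducibility of $m_\alpha$ enters, via the absence of roots of unity among the conjugates), and confirm that the nearest-point map $\phi$ is eventually well defined and intertwines with $C$. Once $\|Q\xi\alpha^n\|\to0$ is secured, the extraction of ``$\alpha$ is PV'' and ``$\xi\in\Q(\alpha)$'' follows the classical template, with irreducibility (and non-cyclotomicity) of $m_\alpha$ again doing the work of collapsing what would otherwise be a lengthy case analysis over the conjugates of modulus $\ge1$.
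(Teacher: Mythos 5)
The paper does not actually prove this lemma---it is imported verbatim from Dubickas \cite[Thm.~1]{du2} and stated with a \textit{qed}---so there is no internal proof to compare against; your proposal is a genuine, self-contained argument, and it is essentially correct. The ``if'' direction via traces of $\beta\alpha^n$ is standard and fine. For ``only if'', the torus construction (the integral companion matrix $C$ acting on $(\R/\Z)^d$, finiteness of the $\omega$-limit set $\Omega$ inside $L^d$, surjectivity of $C$ on $\Omega$ making $C|_\Omega$ a permutation, invertibility of $C^p-I$ because no conjugate of the irreducible $\alpha>1$ is a root of unity, and the shadowing step yielding $\|Q\xi\alpha^n\|\to0$ with $Q\xi\neq0$) is sound, and the concluding Pisot-type analysis of the nearest-integer sequence $(I_n)$ together with the Ces\`aro/orthogonality fact that a generalized power sum tending to $0$ has vanishing coefficients on all roots of modulus $\ge 1$ does deliver both that $\alpha$ is PV and that $Q\xi=\lambda_1\in\Q(\alpha)$. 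One step is justified too quickly: from ``the minimal $\Q$-recurrence polynomial of $(I_n)$ equals $m_\alpha$'' you cannot directly conclude that all $\lambda_j$ are nonzero, since the $\C$-minimal polynomial $\prod_{\lambda_j\neq0}(x-\alpha^{(j)})$ could a priori be a proper, irrational divisor of $m_\alpha$ whose smallest rational multiple is still $m_\alpha$. The correct argument---which you in effect already have, since you invoke Galois invariance of the Vandermonde system a sentence later---is that rationality of the integers $I_n$ forces $\sigma(\lambda_j)=\lambda_{\sigma(j)}$ for every automorphism $\sigma$ of the splitting field, and transitivity of the Galois action on the roots of the irreducible $m_\alpha$ then shows the $\lambda_j$ vanish either all together (impossible, as $I_n\to\infty$) or not at all. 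With that repair the proof is complete; it is a legitimate alternative to simply citing Dubickas, and in spirit close to his original reduction to the classical Pisot--Vijayaraghavan theorem.
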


\begin{lemma}\label{lem:help-3}
  Let $\alpha$ be a PV number, $\xi\notin\Q(\alpha)$ and
  $(y_n)_{n\in\N}:=(\xi\alpha^n)_{n\in\N}$. Then, there are
  numbers $\delta=\delta(\xi,\alpha)\in(0,1)$ and
  $r=r(\xi,\alpha)\in\N$ such that for all $n\in\N$:
\[
   \|y_n\|   < \delta \, \implies \,
   \|y_j \| \ge \delta \text{ for at least one }
    j\in\{n+1,\ldots,n+r\}.
\]
\end{lemma}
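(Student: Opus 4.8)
The plan is to argue by contradiction, combining the lower bound on the oscillation of $\{\xi\alpha^n\}$ from Lemma~\ref{lem:help-1} with the clustering structure of limit points furnished by Lemma~\ref{lem:help-2}. Write $\alpha$ for our PV number $\vartheta$ and recall that $\|y\| = \min(\{y\}, 1-\{y\})$, so that $\|y_n\|$ is small exactly when $\{y_n\}$ is close to $0$ or to $1$. Suppose the claim fails: then for every $\delta\in(0,1)$ and every $r\in\N$ there exists $n$ with $\|y_n\|<\delta$ but $\|y_j\|<\delta$ for all $j\in\{n+1,\dots,n+r\}$. Specialising along sequences $\delta=\delta_m\to 0$ and $r=r_m\to\infty$, one extracts starting indices $n_m$ together with blocks of length $r_m+1$ on which all $y_j$ are within $\delta_m$ of $\Z$.

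First I would use these blocks to produce new limit points of $\{\xi\alpha^n\}$. The point is that multiplication by $\alpha$ acts on the sequence, and if $\{y_n\}$ is within $\delta$ of $0$ (say), then $\{y_{n+1}\}=\{\alpha y_n\}$ is governed, up to a controlled error, by how $\alpha$ distributes a small quantity; but the forced condition $\|y_{n+1}\|<\delta$ pins down which of these possibilities can occur. Iterating along a whole block of length $r_m$, and passing to a subsequential limit of the shifted sequences $(\{y_{n_m+i}\})_{i=0}^{r_m}$, I would obtain an infinite orbit $(z_i)_{i\ge 0}$ under the natural dynamics with $z_i\in\{0,1\}\cdot(\text{something})$ — more precisely, a sequence of limit points of $\{\xi\alpha^n\}$, each of norm $0$, i.e.\ all equal to points in $\Z$ after taking fractional parts, hence all equal to $0$ as fractional parts. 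But a constant orbit at $0$ means $\{\xi\alpha^n\}\to 0$ along that construction, forcing $\limsup_n\{\xi\alpha^n\}-\liminf_n\{\xi\alpha^n\}$ to be small on a robust set of indices, which I then need to leverage against Lemma~\ref{lem:help-1}.

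The cleanest route is probably the following: since $\xi\notin\Q(\alpha)$, Lemma~\ref{lem:help-2} tells us $\{\{\xi\alpha^n\}\mid n\in\N\}$ has infinitely many limit points, while Lemma~\ref{lem:help-1} forces these limit points to have oscillation at least $1/(1+\alpha)$, so there are limit points bounded away from both $0$ and $1$. Fix such a limit point $c$ with $\|c\|\ge\eta>0$ for a suitable $\eta$. Now if the claimed $r$ did not exist, the blocks constructed above would show that arbitrarily long runs of consecutive indices all have $\|y_j\|<\delta_m\to 0$; but the dynamics $y\mapsto\alpha y$ is topologically expanding on the relevant scale, so a run that starts $\delta$-close to $\Z$ cannot stay $\delta$-close to $\Z$ for more than a bounded number $r=r(\alpha,\eta)$ of steps before it is pushed past the limit point $c$ — equivalently, before some $\|y_j\|$ must exceed a fixed threshold. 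Quantifying this expansion (using that $\alpha>1$ and that the conjugates lie strictly inside the unit disc, so the "expanding" direction dominates) gives the explicit $r$ and a $\delta$ small enough that the contradiction closes.

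The main obstacle I anticipate is making the "expansion" argument rigorous in the right coordinates: $\{y_n\}$ itself does not satisfy a clean one-dimensional dynamical recursion because $\xi\notin\Q(\alpha)$ means there is no finite-dimensional companion system to ride along, so the naive picture "$y\mapsto\alpha y\bmod 1$ is expanding" needs care. The fix is to work not with a single orbit but with the \emph{set of limit points} $E:=\overline{\{\{\xi\alpha^n\}\}}$, use Lemma~\ref{lem:help-2} to know $E$ is infinite, and use Lemma~\ref{lem:help-1} to know $E$ meets the region $\|x\|\ge 1/(2(1+\alpha))$; then a compactness plus pigeonhole argument on how indices with $\|y_n\|<\delta$ can be spaced yields $r$. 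I would choose $\delta<\tfrac12\cdot\tfrac{1}{1+\alpha}$ and $r$ the first integer exceeding the time it takes the expanding dynamics to carry an interval of radius $\delta$ around $\Z$ onto a set reaching into $E\cap\{\|x\|\ge 1/(2(1+\alpha))\}$, which is finite because $\alpha>1$.
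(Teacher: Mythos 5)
Your proposal follows essentially the same route as the paper: argue by contradiction, extract arbitrarily long blocks of consecutive indices on which $\|y_j\|<\delta$, and play this off against the facts that the set of limit points of $(\{\xi\alpha^n\})_{n\in\N}$ is infinite (Lemma~\ref{lem:help-2}) and has diameter at least $1/(1+\alpha)$ (Lemma~\ref{lem:help-1}), so that in particular some limit point is bounded away from $\Z$. That is exactly the paper's argument, and the paper in fact stops there: it asserts directly that such blocks are incompatible with this limit-point structure, with no dynamical input whatsoever. The additional layer you introduce --- treating $y\mapsto\alpha y\bmod 1$ as an expanding map that must push a block past a limit point $c$ with $\|c\|\ge\eta$ within a bounded number of steps --- is not present in the paper, and your own reservation about it is well founded: since $\{y_{n+1}\}=\{\alpha\lfloor y_n\rfloor+\alpha\{y_n\}\}$ and $\alpha\lfloor y_n\rfloor$ need not be near an integer, $\{y_{n+1}\}$ is not a function of $\{y_n\}$, so no one-dimensional expansion estimate is available and the ``pushed past $c$'' step would not survive as stated. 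The ``compactness plus pigeonhole'' repair you gesture at is not carried out; to be fair, the published proof supplies no more detail at this juncture either, so your sketch matches it in its ingredients and, minus the superfluous expansion heuristic, in its level of rigour.
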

\begin{proof} Suppose the assertion is wrong. Then, for any
  $\delta\in(0,1)$, there is a strictly increasing sequence
  $(r_m)_{m\in\N}$ of positive integers (hence also unbounded) and a
  subsequence $(y_{n_m})_{m\in\N}$ of $(y_n)_{n\in\N}$, such that
\[
     \|y_{n_m}\| < \delta \, \implies \,
     \|y_j\| < \delta \text{ for all } 
     j\in\{n_m+1,\ldots,n_m+r_m\}.
\]
This cannot be true for any $\delta$ by Lemma~\ref{lem:help-2},
because $(\{y_n\})_{n\in\N}$ has infinitely many limit points and the distance
between the biggest and smallest limit point is at least
$\frac{1}{1+\alpha}$ by Lemma~\ref{lem:help-1}.
\end{proof}

Now, we can determine the intensities for wave numbers
$k\notin\Q(\vartheta)$ as follows.

\begin{theorem}\label{thm:outside}
  For any $k\in \R\setminus \Q(\vartheta)$, one has $I(k)=0$.  
\end{theorem}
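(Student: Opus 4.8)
The plan is to bound $|A_n(k)|^2$ from above by a quantity that is $o(\vartheta^{2n})$, so that the limit in Eq.~(\ref{b}) is forced to be $0$. The starting point is the recursion $A_n(k)=f_{n-1}(k)A_{n-1}(k)+g_{n-2}(k)A_{n-2}(k)$ together with the explicit shape of $f_n$ and $g_n$. The key observation is that $f_n(k)$ contains the term $1+\e^{-2\pi\im k\vartheta^{n_0}}$, and the modulus of $1+\e^{-2\pi\im\theta}$ is $2|\cos(\pi\theta)|$, which is bounded away from $2$ precisely when $\|\theta\|$ is bounded away from $0$. Since $k\notin\Q(\vartheta)$, Fact~\ref{fact} shows that $k\vartheta^{n_0}$ differs from $k\F_{n_0}\vartheta + qk\F_{n_0-1}$ by a term going to zero only in a controlled way; more to the point, writing $\xi:=k$ (or a suitable rational multiple so that $\xi\vartheta^n$ has the form required), we are exactly in the setting of Lemmas~\ref{lem:help-1}--\ref{lem:help-3} with $\alpha=\vartheta$ and $\xi\notin\Q(\vartheta)$.

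First I would reduce the estimate on $|A_n(k)|$ to a product-type bound. From the recursion one gets $|A_n|\le |f_{n-1}|\,|A_{n-1}| + |g_{n-2}|\,|A_{n-2}|$, with the crude bounds $|f_n|\le p$ and $|g_n|\le q$ available everywhere. The idea is that along the indices where $\|k\vartheta^{n_0}\|\ge\delta$ one gets a strictly better bound $|f_n|\le p-2+2|\cos(\pi\|k\vartheta^{n_0}\|)| =: p-\varepsilon$ for some $\varepsilon=\varepsilon(\delta)>0$. Lemma~\ref{lem:help-3} guarantees that such "good" indices occur with bounded gaps $r$: one cannot have $r$ consecutive indices all violating $\|y_j\|\ge\delta$. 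Iterating the recursion in blocks of length $r$, each block picks up at least one factor $(p-\varepsilon)$ in place of $p$, so after $n$ steps one obtains a bound of the shape $|A_n(k)|\le C\,\rho^{n}$ with $\rho < p \le \vartheta$ — more carefully, one compares the growth rate of the linear recursion with majorant coefficients $p,q$ (whose rate is $\vartheta$) against the rate of the recursion where every $r$-th step uses $p-\varepsilon$, and shows the latter rate $\rho$ satisfies $\rho<\vartheta$. Then $|A_n(k)|^2/\vartheta^{2n}\le C^2(\rho/\vartheta)^{2n}\to 0$, giving $I(k)=0$.

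The technical heart, and the main obstacle, is making the block-iteration argument rigorous when the "good" and "bad" steps are interleaved with the two-term memory of the recursion: one must track a pair $(|A_{n-1}|,|A_n|)$ as a vector, estimate it by a product of $2\times 2$ nonnegative matrices $\begin{pmatrix} |f_{n-1}| & |g_{n-2}| \\ 1 & 0\end{pmatrix}$, and show that inserting one "improved" matrix (with $p-\varepsilon$ in the corner) into every window of length $r$ strictly decreases the exponential growth rate of the matrix product below $\vartheta$. This is a submultiplicativity/Lyapunov-exponent estimate; the cleanest route is probably to bound the norm of each length-$r$ block of matrices by a constant strictly smaller than $\vartheta^{r}$, uniformly in where the good index sits within the block, using that the all-$p,q$ block has norm $\asymp\vartheta^{r}$ and that replacing one $p$ by $p-\varepsilon$ is a strict, uniform contraction of that block norm. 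A minor point to dispatch first is the reduction ensuring $\xi\vartheta^{n}$ (with $\xi$ the relevant real coming from $k$ via Fact~\ref{fact}) genuinely satisfies $\xi\notin\Q(\vartheta)$ whenever $k\notin\Q(\vartheta)$, so that Lemmas~\ref{lem:help-1} and~\ref{lem:help-3} apply; since $\Q(\vartheta)$ is a field and $\F_n\in\Z$, this is immediate, but it should be stated.
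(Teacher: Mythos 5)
Your proposal follows essentially the same route as the paper: the recursion $A_{n+1}=f_nA_n+g_{n-1}A_{n-1}$ with the crude bounds $|f_n|\le p$, $|g_n|\le q$, Lemma~\ref{lem:help-3} to guarantee a ``good'' index with $|f_j|\le p-\delta'$ in every window of bounded length $r$, and a block-iteration showing the effective growth rate drops strictly below $\vartheta$. The ``technical heart'' you defer is exactly what the paper carries out by unrolling the recursion over one window, collecting coefficients via~(\ref{a}) to obtain $|A_{n+1}|\le(\F_{r+2}-\delta'')|A_{n-r}|+q\F_{r+1}|A_{n-r-1}|$, and invoking Fact~\ref{fact} (i.e.\ $\vartheta^{r+1}=\F_{r+2}+q\F_{r+1}/\vartheta$) to see the coefficient sum is strictly smaller than $\vartheta^{r+1}$; the paper then closes an induction yielding $|A_n|^2/\vartheta^{2n}\le c/n$, a weaker but sufficient decay compared with the exponential rate your matrix-norm packaging would give.
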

\begin{proof}
  Obviously, it is sufficient to show via induction that
\[
    \forall k\notin\Q(\vartheta):\ 
    \exists c=c(k)>0,n_0=n_0(k)\in\N: \ 
     \forall n\ge n_0:\quad 
    \frac{|A_n(k)|^2}{\vartheta^{2n}}\leq \frac{c}{n}.
\]
For $n\in\{n_0,\ldots,n_0+2r\}$ ($r$ will be chosen later), one just
has to choose the constant $c$ large enough. Now, let the assertion be
true for a fixed $n= n_0+2r$ and its predecessors $n_0,\ldots,
n_0+2r-1$. Without loss of generality, let $n_0$ be so large that
\[
    \frac{n_0+1}{n_0-r-1} \, \leq \, 1+\varepsilon
\]
($\varepsilon>0$ will be chosen later, too). By Lemma~\ref{lem:help-3}, 
there is a $\delta\in(0,1)$ and $r\in\N$, such that $\|k\vartheta^j\|\ge \delta$ 
for at least one $j\in\{n-r,\ldots,n\}$, i.e. 
\[
    |1+\e^{-2\pi\im k\vartheta^j}| \leq 2- \delta'
\]
for some $\delta'\in(0,1)$. Thus, we have 
\[
|f_j| \leq p-\delta' 
\]
for at least one $j\in\{n-r,\ldots,n\}$ each. Without loss of 
generality, we can assume that $j=n-r$. It follows by Eq. ~(\ref{a})
and $|f_n|\leq p$ and $|g_n|\leq q$
\begin{equation*} 
\begin{split}
   |A_{n+1}| \, &= \, |f_nA_n+g_{n-1}A_{n-1}| \leq p|A_n|+q|A_{n-1}|                              \\
    &=\,\F_2|A_n|+q\F_1|A_{n-1}|                                                                                         \\
    &=\, \F_2|f_{n-1}A_{n-1}+g_{n-2}A_{n-2}|+q\F_1|A_{n-1}|                                          \\
    &\leq\, \F_2\left(p|A_{n-1}|+q|A_{n-2}|\right)+q\F_1|A_{n-1}|                                        \\
    &=\, (p\F_2+q\F_1)|A_{n-1}|+q\F_2|A_{n-2}|                                                                   \\
    &=\, \F_3|A_{n-1}|+q\F_2|A_{n-2}|                                                                                  \\
    &\leq\, \ldots                                                                                                                      \\
    &\leq\, \F_{r+1}|A_{n-r+1}|+q\F_{r}|A_{n-r}|                                                                 \\
    &=\, \F_{r+1}|f_{n-r}A_{n-r}+g_{n-r-1}A_{n-r-1}|+q\F_{r}|A_{n-r}|                             \\
    &\leq\, \F_{r+1}\big(|f_{n-r}||A_{n-r}|+|g_{n-r-1}||A_{n-r-1}|\big)+q\F_{r}|A_{n-r}|     \\
    &\leq\, \F_{r+1}\big((p-\delta')|A_{n-r}|+q|A_{n-r-1}|\big)+q\F_{r}|A_{n-r}|    \\
    &=\, \big((p-\delta')\F_{r+1}+q\F_r\big)|A_{n-r}| + q\F_{r+1}|A_{n-r-1}|         \\
    &=\, (\F_{r+2}-\delta'')|A_{n-r}| + q\F_{r+1}|A_{n-r-1}|
\end{split} 
\end{equation*}
for $\delta'':=\F_{r+1}\delta'>0$. Therefore, there is a $\delta''>0$  and $r\in\N$ such that
\begin{equation}\label{c}
    |A_{n+1}| \, \leq \,
     (\F_{r+2}-\delta'')|A_{n-r}|+
      q\F_{r+1}|A_{n-r-1}|. 
\end{equation}
By the induction hypothesis and Eq. ~(\ref{c}), we have
\begin{equation*} 
\begin{split} 
  \frac{|A_{n+1}|^2}{\vartheta^{2n+2}}\,
  &\leq\, \left(\frac{(\F_{r+2}-\delta'')|A_{n-r}|+
    q\F_{r+1}|A_{n-r-1}|}{\vartheta^{n+1}}\right)^2 \\
  &\leq\, \left(\frac{\F_{r+2}-\delta''}{\vartheta^{r+1}}\cdot 
    \sqrt{\frac{c}{n-r}}+\frac{q\F_{r+1}}
     {\vartheta^{r+2}}\cdot \sqrt{\frac{c}{n-r-1}} \right)^2. 
\end{split} 
\end{equation*}
The right hand side is bounded by $\frac{c}{n+1}$ if and only if
\[
   S \,:= \,\left(\frac{\F_{r+2}-\delta''}{\vartheta^{r+1}}\cdot 
   \sqrt{\frac{n+1}{n-r}}+\frac{q\F_{r+1}}
     {\vartheta^{r+2}}\cdot \sqrt{\frac{n+1}{n-r-1}} 
    \right)^2 \leq \, 1.
\]
This in turn is true because
\begin{equation*} 
\begin{split}  
   S \, &\leq \, \vartheta^{-2r-2} \cdot \left( 
         (\F_{r+2}-\delta'')\cdot \sqrt{1+\varepsilon}+ 
         \frac{q\F_{r+1}}{\vartheta}\cdot 
         \sqrt{1+\varepsilon}\right)^2 \\
     &= \, \vartheta^{-2r-2} \cdot \left( (\F_{r+2}-\delta'')+ 
        \frac{q\F_{r+1}}{\vartheta}\right)^2
        \cdot(1+\varepsilon) \, \leq \, 1
\end{split} 
\end{equation*}
holds if and only if
\[
   \varepsilon \, \leq \, \vartheta^{2r+2} \cdot 
    \left( (\F_{r+2}-\delta'')+ \frac{q\F_{r+1}}
       {\vartheta}\right)^{-2} -1.
\]
Now, by Fact~\ref{fact}, $\varepsilon$ can be chosen positive because
\begin{equation*} 
\begin{split}
  0 \, &= \, \vartheta^{2r+2} \cdot(\vartheta^{r+1})^{-2} -1 \\
       &=\,  \vartheta^{2r+2} \cdot \left( \F_{r+2}+ 
             \frac{q\F_{r+1}}{\vartheta}\right)^{-2} -1 \\
       &<\, \vartheta^{2r+2} \cdot \left( (\F_{r+2}-\delta'')
       + \frac{q\F_{r+1}}{\vartheta}\right)^{-2} -1.
\end{split} 
\end{equation*}
By induction, the assertion is true for all $n\ge n_0$.
\end{proof}

\medskip

Let us comment on some connections with known results in the literature.
There is the following link to substitution dynamical 
systems. Let $\zeta$ be a primitive Pisot substitution on $\A=\{a,b\}$ (for 
example one of the substitutions considered above) and 
$w$ be a fixed point of $\zeta$, i.e. an element of $\A^{\Z}$ such that 
$\zeta(w)=w$. Let $S:\A^{\Z}\to \A^{\Z}$ be the shift map defined by
\[
(Sv)_k:=v_{k+1},
\]
and let 
\[
X_{\zeta}:=
\overline{\{S^jw\ |\ j\in\Z\}}\subseteq\A^{\Z}. 
\]
The pair $(X_{\zeta},S)$
is a topological dynamical system, called the substitution dynamical system 
(for $\zeta$). In this situation, it is a well-known fact that $(X_{\zeta},S)$ is
uniquely ergodic, i.e. there is a unique $S$-invariant Borel probability measure 
$\mu$. The system $(X_{\zeta},S,\mu)$ is a measure-preserving system 
and its spectral type is, by definition, the spectral type of the unitary operator 
$U$ on $L^2(X_{\zeta},\mu)$ defined by
\[
Uf(x) = f(Sx).
\]
Furthermore, due to \cite[Thm. 2.2]{hs}, we know that $(X_{\zeta},S,\mu)$ is
pure point (or has
pure discrete spectrum). This means that there is a basis of $L^2(X_{\zeta},\mu)$
consisting of eigenfunctions of $U$. By the
Halmos--von Neumann Theorem, a  measure-preserving transformation is
pure point if and only if it is measure-theoretically isomorphic
to a translation on a compact  Abelian group, see \cite{wal}. \\
The same holds
true if, instead of considering the substitution dynamical system with $\Z$-action,
we have a look at the corresponding tiling dynamical system with $\R$-action;
see \cite{ss} for definitions and results. This also follows from \cite[Thm. 3.1]{cs}. \\
Now, the connection between these results and diffraction theory is the following
statement. Given a dynamical system on the translation bounded measures 
$(\Omega,\alpha)$ with invariant probability measure $m$, associated unitary 
representation $T_m$ by translation operators and associated diffraction measure 
$\widehat{\gamma_m}$, then $\widehat{\gamma_m}$ is pure point if and only 
if $T_m$ is pure point, see \cite[Thm. 7]{bl}.

The above derivation re-establishes the key result via explicit estimates of the 
underlying exponential sums, thus interpreting $A(k)$ from~(\ref{1}) as an 
amplitude - despite the fact that $\delta_{\Lambda}$ for the corresponding model
set is not Fourier transformable as a measure. It is expected that this phenomenon 
is much more general, though it is presently not clear how to extend the concrete 
approach accordingly; see \cite{bl2}.

\medskip

Let us now turn to consequences of Theorem~\ref{thm:diffract-1} 
outside the realm of deterministic inflation rules.

\section{Outlook}
One can extend the result of Theorem~\ref{thm:outside} as follows. In
\cite{gl1}, Godr\`eche and Luck introduced the concept of \emph{random
inflation tilings}, which extends the study of conventional substitutions. 
A mathematical rigorous treatment of a special class of such substitutions 
can be found in \cite{bmo,mo,mo2}.

\begin{definition}
  A substitution $\rho:\A_n^*\to\A_n^*$ is called
  \emph{stochastic} or  \emph{random} if there
  are $k_1,\ldots,k_n\in\N$ and probability vectors
\[
   \big\{{\boldsymbol p}_i=(p_{i1},\ldots,p_{ik_i})\ |\
   {\boldsymbol p}_i\in[0,1]^{k_i}\  \;\text{ and }\; 
   \sum_{j=1}^{k_i}p_{ij}=1,\ 1\leq i\leq n\big\},
\]
such that
\[
   \rho: \; a_i\mapsto \begin{cases} 
   w^{(i,1)}, & \text{with probability } p_{i1}, \\
    \quad \vdots  & \quad \quad \quad \quad \ \vdots \\
   w^{(i,k_i)}, & \text{with probability } p_{ik_i},
   \end{cases} 
\]
for $1\leq i\leq n$ where each $w^{(i,j)}\in\A_n^*$. The corresponding
stochastic substitution matrix is defined by
\[
    M_{\rho}\, := \big(\sum_{q=1}^{k_j} p_{jq} 
   \operatorname{card}_{a_i}w^{(j,q)} 
   \big)_{1\leq i,j\leq n}\in \operatorname{Mat}(n,\R_{\ge0}).
\]
\end{definition}

\begin{remark}
  As in the deterministic case, a random substitution $\rho$ is called
  primitive if and only if $M_{\rho}$ is a primitive matrix. Note, however, 
that the meaning is now a stochastic one.
\end{remark}

Now, let $m\in\N$ and ${\boldsymbol p}_m=(p_0,\ldots,p_m)$ be a
probability vector, both assumed to be fixed. The \emph{random
  substitution} $\zeta_m:\A_2^*\to \A_2^*$ is defined by
\[
    \zeta_m: \; \begin{cases}
      a\mapsto \begin{cases}
      ba^m, & \text{with probability } p_0,\\
      aba^{m-1}, &  \text{with probability } p_1, \\
      \quad \vdots & \quad \quad \quad \quad \ \vdots \\
      a^{m-1}ba, & \text{with probability } p_{m-1} \\
      a^mb, & \text{with probability } p_m,
       \end{cases} \\
      b\mapsto a,
      \end{cases} 
\]
and the one-parameter family $\mathcal R=\{\zeta_m\}_{m\in\N}$ is
called the family of $\textit{random noble means}$
$\textit{substitutions}$ (RNMS). The stochastic substitution matrix 
is given by
\[
   M_m \, := \, M_{\zeta_m} \, = \, \begin{pmatrix}
      m & 1 \\ 1 & 0  \end{pmatrix},
\]
which is independent of the probability vector ${\boldsymbol p}_m$.
The eigenvalues are $\lambda_m:=\frac{m+\sqrt{m^2+4}}{2}$ and
$\lambda_m':=\frac{m-\sqrt{m^2+4}}{2}$, while the left eigenvector is
$(\lambda_m,1)$. Now, one can prove (almost) along the same lines as
in Theorem~\ref{thm:outside} that $I(k)=0$ for all $k\notin\Q(\lambda_m)$. 
Even more, with a modification of Lemma~\ref{lem:help-3}, one can 
prove the following result.

\begin{prop}
  Let $m\in\N$ and consider the RNMS $\zeta_m$. For any 
wave number $k\in \R\setminus\frac{\Z[\lambda_m]}{\sqrt{m^2+4}}$ 
  (i.e. for any $k$ that is not in the Fourier module), we have $I(k)=0$.
\end{prop}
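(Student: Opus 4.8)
The plan is to mimic the argument of Theorem~\ref{thm:outside}, replacing the deterministic recursion for $A_n(k)$ by the recursion appropriate to the random substitution $\zeta_m$ and replacing the eigenvalue $\vartheta$ by $\lambda_m$. First I would recall from \cite{mo,mo2} that the amplitudes $A_n(k)$ of a geometric patch generated by $\zeta_m^n$ (acting on the seed word $b$) satisfy a two-term recursion of the same shape,
\[
A_{n+1}(k) \, = \, f_n(k)\,A_n(k) + g_{n-1}(k)\,A_{n-1}(k),
\]
where, for every realisation of the random rule, $f_n$ is a sum of $m+1$ unimodular exponentials containing a block $1+\e^{-2\pi\im k\lambda_m^{n_0}}$ with $n_0\in\{n-1,n-2\}$, and $g_{n-1}$ is a single unimodular exponential; consequently $|f_n|\le m+1$ and $|g_{n-1}|\le 1$ pointwise in $\omega$. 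The Fourier module is now $L^{\circledast}=\Z[\lambda_m]/\sqrt{m^2+4}$, and the intensities are still obtained as $I(k)=\lim_{n\to\infty}|A_n(k)|^2/\lambda_m^{2n}$ (this is where one has to be a little careful: in the random case the diffraction is the expectation, so $A_n(k)$ should be read as an expected amplitude, or one argues for a.e.\ realisation — either way the bound below is uniform in $\omega$, so it passes to the expectation without change).

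The second step is the analytic input. The number $\lambda_m$ is a PV number (the noble means), and $k\notin L^{\circledast}=\Z[\lambda_m]/\sqrt{m^2+4}$ should be shown to force $k\notin\Q(\lambda_m)$ — here one uses that $\Z[\lambda_m]$ has the ring of integers of $\Q(\sqrt{m^2+4})$ as an overorder commensurable with it, so $\Q(\lambda_m)\cap\R = \Q(\sqrt{m^2+4})$ is spanned over $\Q$ by $1$ and $\sqrt{m^2+4}$, and a wave number outside the rational span of $L^{\circledast}$ is irrational over $\Q(\lambda_m)$. Then Lemma~\ref{lem:help-2} and Lemma~\ref{lem:help-1} apply to $\alpha=\lambda_m$, $\xi=k$, giving that $(\{k\lambda_m^n\})_n$ has infinitely many limit points spread over an interval of length at least $1/(1+\lambda_m)$. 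The modification of Lemma~\ref{lem:help-3} announced in the excerpt then yields $\delta\in(0,1)$ and $r\in\N$ such that in every window $\{n-r,\dots,n\}$ there is at least one index $j$ with $\|k\lambda_m^j\|\ge\delta$; since in the random case the block $1+\e^{-2\pi\im k\lambda_m^{j_0}}$ may sit at exponent $j_0\in\{j-1,j-2\}$ rather than exactly $j$, one simply enlarges $r$ by $2$ so that the ``good'' exponent is still captured inside the window. This gives $|f_{j}|\le (m+1)-\delta'$ for at least one $j$ in the window, for some $\delta'\in(0,1)$ depending only on $\delta$.

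The third step is the induction, which is essentially verbatim that of Theorem~\ref{thm:outside} with $(p,q)\leadsto(m,1)$, $\vartheta\leadsto\lambda_m$, and $\F_n$ the solution of $\F_n=m\F_{n-1}+\F_{n-2}$, $\F_0=0$, $\F_1=1$. One proves $|A_n(k)|^2/\lambda_m^{2n}\le c/n$ for $n\ge n_0$ by telescoping the recursion $r+1$ steps down to the window, inserting the single improved factor $|f_j|\le (m+1)-\delta'$, and arriving at
\[
|A_{n+1}(k)| \, \le \, (\F_{r+2}-\delta'')\,|A_{n-r}(k)| + \F_{r+1}\,|A_{n-r-1}(k)|
\]
with $\delta''=\F_{r+1}\delta'>0$; the scaling identity $\lambda_m^{r+2}=\F_{r+2}\lambda_m+\F_{r+1}$ from the analogue of Fact~\ref{fact} (for $q=1$) shows $\lambda_m^{2r+2}(\F_{r+2}+\F_{r+1}/\lambda_m)^{-2}=1$, hence with the $-\delta''$ perturbation one can pick $\varepsilon>0$ making the one-step bound close under the induction, exactly as in the deterministic proof. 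Because every constant ($\delta,\delta',\delta'',r,c,n_0$) is uniform over the random realisations, the bound $I(k)=0$ holds both realisation-wise and for the expected diffraction.

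\textbf{Main obstacle.}
The only genuinely new work is the modification of Lemma~\ref{lem:help-3}: one must check that the stochastic placement of the distinguished exponential inside $f_n$ does not defeat the ``good index in every window'' conclusion. The clean way is to observe that the exponents occurring across all realisations of $\zeta_m^n$ are all of the form $k\lambda_m^{i}$ with $i$ ranging over a bounded neighbourhood of $n$, so enlarging $r$ by an absolute constant absorbs the ambiguity; verifying this boundedness rigorously from the concatenation structure of $\zeta_m$ (rather than hand-waving it) is the part that needs the most care. Everything downstream is a mechanical transcription of the proof of Theorem~\ref{thm:outside}.
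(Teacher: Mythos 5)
There is a genuine gap, and it sits exactly where the Proposition goes beyond Theorem~\ref{thm:outside}. You claim that $k\notin\Z[\lambda_m]/\sqrt{m^2+4}$ forces $k\notin\Q(\lambda_m)$, and you use this to reduce the whole statement to the case already covered by the argument of Theorem~\ref{thm:outside}. That reduction is false: the Fourier module $\Z[\lambda_m]/\sqrt{m^2+4}$ is only a rank-two $\Z$-module inside the two-dimensional $\Q$-vector space $\Q(\lambda_m)=\Q(\sqrt{m^2+4})$, so $\Q(\lambda_m)\setminus\bigl(\Z[\lambda_m]/\sqrt{m^2+4}\bigr)$ is a large (dense) set --- for instance, divide any nonzero element of the module by a sufficiently large prime. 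For such $k$ your analytic machinery breaks down entirely: Lemma~\ref{lem:help-1} requires $\xi\notin\Q(\alpha)$, and Lemma~\ref{lem:help-2} tells you that for $\xi\in\Q(\alpha)$ the sequence $(\{\xi\alpha^n\})_n$ has only \emph{finitely} many limit points, so the ``infinitely many limit points spread over an interval of length $1/(1+\lambda_m)$'' input to Lemma~\ref{lem:help-3} is simply unavailable. The paper treats this as a separate case and this is precisely what the announced ``modification of Lemma~\ref{lem:help-3}'' is for: one must show directly that for $k\in\Q(\lambda_m)$ but outside the Fourier module, the finitely many limit points of $\{k\lambda_m^n\}$ are not all at $0$ or $1$ (if $k\lambda_m^n$ accumulated only on $\Z$, $k$ would lie in the dual module), which again yields a $\delta$ and an $r$ such that every window of length $r$ contains an index $j$ with $\|k\lambda_m^j\|\ge\delta$. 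Without this case your proof only re-establishes $I(k)=0$ for $k\notin\Q(\lambda_m)$, which is the statement the paper already notes follows ``along the same lines'' as Theorem~\ref{thm:outside}, not the Proposition itself.

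The rest of your plan (the recursion for $A_n$, absorbing the stochastic placement of the distinguished exponential by enlarging $r$, and the verbatim induction with $(p,q)=(m,1)$) matches the paper's intent and is fine, modulo one small slip: for $\zeta_m$ the word $w(a,b)$ contains $m$ letters $a$ and one $b$, so $f_n$ is a sum of $m$ unimodular terms and $|f_n|\le m$ (not $m+1$), with $|g_n|\le 1$; this only changes constants, but you should fix it so that the identity $\lambda_m^{r+2}=\F_{r+2}\lambda_m+\F_{r+1}$ closes the induction exactly as in the deterministic proof.
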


\begin{proof}[Sketch of proof]
  For $k\notin\Q(\lambda_m)$, one can argue as in the proof of 
  Theorem~\ref{thm:outside}.
  The functions $f_n$ and $g_n$ are again sums of exponential
  functions as above (maybe multiplied by some $p_i$) and are again
  bounded by $m$ respectively $1$.

  For $k\in\Q(\lambda_m)\setminus\frac{\Z[\lambda_m]}{\sqrt{m^2+4}}$,
  the assertion of Lemma~\ref{lem:help-3} is obviously still true and
  one can argue again as in the proof of Theorem~\ref{thm:outside}.
\end{proof}

\begin{remark} 
In a deterministic setting, substitution dynamical systems are rather 
well understood; see e.g. \cite{bg,qu}. Far less is known in the realm of
systems inducing mixed spectra. In this case, the understanding in
the presence of entropy is only at its beginning, and it is desirable to
work out particular examples like the RNMS. For more information 
about the RNMS, see \cite{bmo,mo,mo2}. 
\end{remark}

As before, various generalisations should be possible, in particular in view 
of the fact that Godr\`eche and Luck \cite{gl1} also treat planar analogues. 
At present, it is not clear though how the above approach can be extended 
to cover planar systems.

\section*{Acknowledgments}

The author wishes to thank Michael Baake for helpful 
discussions and two anonymous referees
for useful comments. This work is supported by the German 
Research Foundation (DFG) via the Collaborative Research 
Centre (CRC 701) through the faculty of Mathematics, 
Bielefeld University.

\end{document}